
\documentclass[10pt]{amsart}
\usepackage{amsmath,amsthm,amsfonts,amssymb,amscd}
\usepackage[dvips]{graphicx}
\usepackage{psfrag}

\theoremstyle{plain}
\newtheorem{thm}{Theorem}[section]

\newtheorem{lemma}[thm]{Lemma}

\newtheorem{maintheorem}{Theorem}

\setlength\oddsidemargin{1.1cm}
\setlength\topmargin{0cm}
\setlength\textheight{21cm}
\setlength\textwidth{14.5cm}
\setlength\footskip{2cm}

\title[Pseudoconnections and Ricci flow]{Pseudoconnections and Ricci flow}

\author[C. Morales]{C. Morales}




\subjclass[2010]{Primary 53C05; Secondary 58J60.}

\keywords{Metric Flow, Ricci Flow, Pseudoconnection.}

\address{
C. Morales\\
Instituto de Matematica\\ Universidade Federal do Rio de Janeiro\\
P. O. Box 68530, 21945-970\\
Rio de Janeiro, Brazil.}

\begin{document}

\begin{abstract}
In this note we explain how a flow in the space of Riemmanian metrics (including Ricci's \cite{mt}) induces
one in the space of pseudoconnections.
\end{abstract}

\maketitle

Let $M$ be a differentiable manifold.
Denote by $C^\infty(M)$ the ring of all $C^\infty$ real valued maps defined in $M$. Given a vector bundle $\xi$ over $M$ we denote by $\Omega^0(\xi)$ the $C^\infty(M)$-module
of $C^\infty$ sections of $\xi$.
We write $\mathcal{X}$ instead of $\Omega^0(TM)$.
A {\em tensor field of order $p$ of $M$}
is a $C^\infty(M)$-linear map
$T:\mathcal{X}^p\to C^\infty(M)$, where $\mathcal{X}^p$ is the product of $p$ copies of $\mathcal{X}$.
We say that $T$ is {\em nondegenerated} if $T(X,\cdots,X)>0$ for all
$X\neq 0$, and {\em symmetric} if
$T(\cdots, X,\cdots, Y,\cdots)=T(\cdots, Y,\cdots, X,\cdots)$ for all $X,Y\in
\mathcal{X}$.
Denote by $Sym_p(M)$ the set of all symmetric
tensor fields of order $p$ of $M$.
A {\em Riemannian metric} of $M$ is a nondegenerated element of $Sym_2(M)$.
We denote by $Riem(M)$ the space of Riemannian metrics of $M$.

A {\em pseudoconnection} of a vector bundle $\xi$ over $M$ is an $I\!\! R$-bilinear map
$Q:\mathcal{X}\times \Omega^0(\xi)\to\Omega^0(\xi)$ for which there is a homomorphism of $C^\infty(M)$-modules
$P(Q):\Omega^0(\xi)\to \Omega^0(\xi)$, called {\em principal homomorphism}, such that
$$
Q_{fX}s=fQ_Xs \quad \mbox{ and }
\quad Q_X(fs)=X(f)P(Q)(s)+fQ_Xs
$$
for all $(X,s)\in \mathcal{X}\times \Omega^0(\xi)$
and all $f\in C^\infty(M)$ (e.g. \cite{s}). A connection of $\xi$ is a
pseudoconnection $\nabla$ for which $P(\nabla)$ is the identity homomorphism.
By a pseudoconnection (resp. connection) of $M$ we mean a pseudoconnection
(resp. connection) of its tangent bundle $TM$. A pseudoconnection $Q$ of $M$ is {\em symmetric} if $Q_XY-Q_YX=P(Q)([X,Y])$ for all $X,Y\in \mathcal{X}$, where $[X,Y]$ denotes the Lie bracket.

It is possible to obtain symmetric pseudoconnections of $M$ from
the following lemma.

\begin{lemma}
\label{l1}
For every map $R:Riem(M)\to Sym_2(M)$ and every
$g\in Riem(M)$ there is a symmetric pseudoconnection
$R^g$ of $M$ satisfying
$$
R(g)(X,Y)=g(P(R^g)(X),Y),
\,\,\,\,\,\,\,\forall X,Y\in \mathcal{X}.
$$
\end{lemma}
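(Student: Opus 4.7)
The plan is to produce $R^g$ by twisting the Levi-Civita connection of $g$ by a $C^\infty(M)$-linear endomorphism $A$ of $\mathcal{X}$ built from $R(g)$ via the metric.

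First I would use the nondegeneracy of $g$ to define $A:\mathcal{X}\to\mathcal{X}$. For each $X\in\mathcal{X}$, the map $Y\mapsto R(g)(X,Y)$ is a $C^\infty(M)$-linear functional on $\mathcal{X}$; the musical isomorphism $Z\mapsto g(Z,\cdot)$ from $\mathcal{X}$ to its $C^\infty(M)$-dual (well-defined and bijective because $g$ is Riemannian, hence nondegenerate) yields a unique $A(X)\in\mathcal{X}$ with $g(A(X),Y)=R(g)(X,Y)$. The $C^\infty(M)$-linearity of $R(g)$ in the first slot then transfers to give $C^\infty(M)$-linearity of $A$.

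Next I would let $\nabla^g$ denote the Levi-Civita connection of $g$ and set $R^g_X s:=A(\nabla^g_X s)$ for $X,s\in\mathcal{X}$. A direct check shows that $R^g$ is $\mathbb{R}$-bilinear, tensorial in $X$ (since $A$ is $C^\infty(M)$-linear), and satisfies
\[
R^g_X(fs)=A\bigl(X(f)s+f\nabla^g_X s\bigr)=X(f)A(s)+fR^g_X s,
\]
which is the Leibniz rule with principal homomorphism $P(R^g)=A$. Symmetry is immediate from the torsion-freeness of $\nabla^g$ together with the linearity of $A$: $R^g_X Y-R^g_Y X=A([X,Y])=P(R^g)([X,Y])$. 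The required identity $g(P(R^g)(X),Y)=R(g)(X,Y)$ is then just the defining property of $A$.

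The only delicate point is justifying the musical isomorphism in the purely module-theoretic setting used in the paper, i.e.\ verifying that $X\mapsto g(X,\cdot)$ yields a bijection from $\mathcal{X}$ onto the $C^\infty(M)$-dual of $\mathcal{X}$. This follows by a standard partition-of-unity argument once one knows the pointwise invertibility of the coordinate matrix of $g$, and should pose no real obstacle; I do not expect any other step of the argument to be subtle.
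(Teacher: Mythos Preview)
Your argument is correct: defining $A$ via $g(A(X),Y)=R(g)(X,Y)$ and setting $R^g_X Y=A(\nabla^g_X Y)$ does produce a symmetric pseudoconnection with principal homomorphism $A$, and the stated identity is then tautological. The paper, however, constructs a \emph{different} $R^g$, namely the one given implicitly by the Koszul-type formula
\[
2g(R^g_X Y,Z)=X(R(g)(Y,Z))+Y(R(g)(Z,X))-Z(R(g)(X,Y))+R(g)([X,Y],Z)+R(g)([Z,X],Y)-R(g)([Y,Z],X),
\]
i.e.\ the usual Koszul expression with $R(g)$ substituted for $g$ on the right. Both constructions share the same principal homomorphism $A$, so both verify the lemma, and yours is arguably the cleaner route to the bare statement. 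The trade-off is downstream: in the proof of Theorem~A the paper differentiates the Koszul formula for $\nabla^t$ and obtains exactly the six-term expression above, which it then identifies as $2g_t(R^{g_t}_X Y,Z)$. Your choice $R^g=A\circ\nabla^g$ satisfies $R^g-P(R^g)\circ\nabla^g\equiv 0$, so it cannot play the role of $Q^t$ in the evolution equation $\partial_t\nabla^t+P(Q^t)\circ\nabla^t=Q^t$ unless $\nabla^t$ is constant in $t$. Thus the paper's less economical construction is the one that is actually reusable later.
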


\begin{proof}
Define $R^g:\mathcal{X}\times \mathcal{X}\to\mathcal{X}$
implicitely by
$$
2g(R^g_XY,Z)=
X(R(g)(Y,Z))+Y(R(g)(Z,X))-Z(R(g)(X,Y))+
$$
$$
R(g)([X,Y],Z)
+R(g)([Z,X],Y)-R(g)([Y,Z],X),
\,\,\,\,\,\forall X,Y,Z\in \mathcal{X}.
$$
A straightforward computation shows not only that $R^g$ is well-defined
but also that it is a symmetric pseudoconnection of $M$.
\end{proof}

On the other hand, we know from the basic Riemannian geometry \cite{dcm} that
for every Riemannian metric $g$ there is a unique symmetric
connection $\nabla$ of $M$ which is {\em compatible} with $g$, namely,
$$
Xg(Y,Z)=g(\nabla_XY,Z)+g(Y,\nabla_XZ),
\,\,\,\,\,\,\,\,\,\,\forall X,Y,Z\in \mathcal{X}.
$$
This is the so-called {\em Levi-Civita connection} of $g$.

The objetive of this paper is to relate pseudoconnections to {\em metric flows},
i.e., one-parameter families of
Riemannian metrics $g_t$ in $M$ solving the differential equation
\begin{equation}
\label{eq1}
\frac{\partial g_t}{\partial t}=R(g_t),
\end{equation}
for some $R:Riem(M)\to Sym_2(M)$.
The most famous example of a metric flow
on a manifold is the so-called {\em Ricci flow} in which $R(g)=Ric(g)$
stands for the Ricci tensor of $g$ (c.f. \cite{mt}).

Now we state our result.

\begin{maintheorem}
\label{thA}
To every solution $g_t$ of (\ref{eq1}) it corresponds
a one-parameter family of symmetric pseudoconnections $Q^t$
of $M$ such that the Levi-Civita connection $\nabla^t$
of $g_t$ evolves according to the following equation
\begin{equation}
\label{eq3}
\frac{\partial\nabla^t}{\partial t}+P(Q^t)\circ\nabla^t=Q^t.
\end{equation}
\end{maintheorem}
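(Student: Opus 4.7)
The plan is to apply Lemma~\ref{l1} directly to the map $R$ that drives the flow, producing, for each $t$, a symmetric pseudoconnection $Q^t := R^{g_t}$ whose principal homomorphism $P(Q^t)$ satisfies the representation
$$R(g_t)(X,Y)=g_t\bigl(P(Q^t)(X),Y\bigr),\qquad \forall X,Y\in\mathcal{X}.$$
This is the only candidate in sight, and the theorem will follow by showing that \eqref{eq3} is what one gets when the Koszul formula for $\nabla^t$ is differentiated in $t$.

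More precisely, the Levi-Civita connection $\nabla^t$ of $g_t$ is characterized by the Koszul identity
$$2g_t(\nabla^t_XY,Z) = Xg_t(Y,Z)+Yg_t(Z,X)-Zg_t(X,Y)+g_t([X,Y],Z)+g_t([Z,X],Y)-g_t([Y,Z],X).$$
Since $\nabla^t-\nabla^s$ is a $C^\infty(M)$-bilinear tensor, $\partial_t\nabla^t$ makes sense pointwise, and differentiating the Koszul identity in $t$ yields, using $\partial_t g_t=R(g_t)$,
$$2R(g_t)(\nabla^t_XY,Z)+2g_t\Bigl(\tfrac{\partial \nabla^t}{\partial t}_XY,Z\Bigr)=\Phi(X,Y,Z),$$
where $\Phi(X,Y,Z)$ is precisely the right-hand side of the implicit formula in the proof of Lemma~\ref{l1} applied to $R(g_t)$; that is, $\Phi(X,Y,Z)=2g_t(Q^t_XY,Z)$.

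Now I would rewrite the first term on the left using the representation above: $R(g_t)(\nabla^t_XY,Z)=g_t\bigl(P(Q^t)(\nabla^t_XY),Z\bigr)$. Substituting and cancelling the factor of $2$,
$$g_t\bigl(P(Q^t)(\nabla^t_XY),Z\bigr)+g_t\Bigl(\tfrac{\partial \nabla^t}{\partial t}_XY,Z\Bigr)=g_t(Q^t_XY,Z),\qquad \forall X,Y,Z\in\mathcal{X}.$$
Nondegeneracy of $g_t$ allows $Z$ to be cancelled, leaving $\partial_t\nabla^t + P(Q^t)\circ\nabla^t = Q^t$, which is \eqref{eq3}.

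I do not expect any single step to be a serious obstacle; the main care is bookkeeping, namely recognizing that the $t$-derivative of the six-term Koszul expression is literally the defining identity of $R^{g_t}$ from Lemma~\ref{l1} with $R$ replaced by $\partial_t g_t=R(g_t)$, so the pseudoconnection $Q^t$ drops out naturally rather than being constructed ad hoc. One should also verify that $\partial_t\nabla^t$ is indeed a tensor (equivalently, that \eqref{eq3} makes tensorial sense), which follows from the fact that the difference of two connections is tensorial together with the Leibniz identities satisfied by $Q^t$ and $P(Q^t)$.
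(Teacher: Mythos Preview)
Your proposal is correct and follows essentially the same route as the paper: both differentiate the Koszul formula in $t$, use \eqref{eq1} to replace $\partial_t g_t$ by $R(g_t)$, recognize the resulting six-term expression as the defining identity of $R^{g_t}$ from Lemma~\ref{l1}, rewrite $R(g_t)(\nabla^t_XY,Z)$ via the principal homomorphism, and then cancel $Z$ by nondegeneracy to obtain \eqref{eq3} with $Q^t=R^{g_t}$. The only cosmetic difference is that you name $Q^t$ at the outset whereas the paper names it at the end.
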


\begin{proof}
Fix a solution $g_t$ of (\ref{eq1}) and their corresponding Levi-Civita connections $\nabla^t$. We know from
basic Riemannian geometry \cite{dcm} that $\nabla^{t}$
satisfies the so-called Kozul formula
$$
2g_t(\nabla^t_XY,Z)=X(g_t(Y,Z))+Y(g_t(Z,X))-Z(g_t(X,Y))+
$$
$$
g_t([X,Y],Z)+g_t([Z,X],Y)-g_t([Y,Z],X),
\,\,\,\,\,\,\forall X,Y,Z\in \mathcal{X}.
$$
Derivating it with respect to $t$
(as in the proof of Proposition 1.10 p. 21 in \cite{m})
and using (\ref{eq1}) we get
\begin{equation}
\label{eq2}
2g_t\left(\frac{\partial\nabla^t}{\partial t}(X,Y),Z\right)=-2R(g_t)(\nabla^t_XY,Z)+
X(R(g_t)(Y,Z))+
\end{equation}
$$
Y(R(g_t)(Z,X))-
Z(R(g_t)(X,Y))+
R(g_t)([X,Y],Z)
+R(g_t)([Z,X],Y)-
$$
$$
R(g_t)([Y,Z],X).
$$
On the other hand, applying Lemma \ref{l1} we obtain
$$
R(g_t)(\nabla^t_XY,Z)=g_t((P(R^{g_t})\circ\nabla^t)(X,Y),Z)
$$
and
$$
X(R(g_t)(Y,Z))+
Y(R(g_t)(Z,X))-
Z(R(g_t)(X,Y))+
R(g_t)([X,Y],Z)+
$$
$$
R(g_t)([Z,X],Y)-
R(g_t)([Y,Z],X)= 2g_t(R^{g_t}_XY,Z).
$$
So,
$$
g_t\left(\frac{\partial\nabla^t}{\partial t}(X,Y),Z\right)=
g_t\left(\left(-(P(R^{g_t})\circ\nabla^t)+R^{g_t}\right)(X,Y),Z\right),
\,\,\forall X,Y,Z\in \mathcal{X}.
$$
Since $g_t$ is a Riemmanian metric we conclude that
$$
\frac{\partial\nabla^t}{\partial t}+P(R^{g_t})\circ\nabla^t=R^{g_t},
\quad\forall t.
$$
Since $R^{g_t}$ is a symmetric pseudoconnection by Lemma \ref{l1}
we obtain (\ref{eq3}) by taking $Q^t=R^{g_t}$. The proof follows.
\end{proof}

\end{document}